\newcommand{\Z}{\mathbb{Z}}
\newcommand{\N}{\mathbb{N}}
\renewcommand{\P}{\mathbb{P}}
\newtheorem{theorem}{Theorem}[section]
\newtheorem*{theorem*}{Theorem}
\newtheorem{lemma}[theorem]{Lemma}
\newtheorem{corollary}[theorem]{Corollary}
\newtheorem*{corollary*}{Corollary}
\theoremstyle{definition}
\newtheorem*{definition*}{Definition}
\newtheorem{definition}[theorem]{Definition}
\newtheorem{question}[theorem]{Question}
\theoremstyle{remark}
\newtheorem*{remark*}{Remark}
\newtheorem{remark}{Remark}
\theoremstyle{plain}
\newcounter{MainTheoremCounter}
\newtheorem{Maintheorem}[MainTheoremCounter]{Theorem}
\crefname{MainTheoremCounter}{Theorem}{Theorems}
\Crefname{MainTheoremCounter}{Theorem}{Theorems}
\theoremstyle{plain}
\newcounter{OldTheoremCounter}
\newcommand{\vertiii}[1]{{\left\vert\kern-0.25ex\left\vert\kern-0.25ex\left\vert #1 
		\right\vert\kern-0.25ex\right\vert\kern-0.25ex\right\vert}}
\title{On infinite sumsets and sets of multiple recurrence}
\author{Luke Hetzel}
\address{Department of Mathematics\\
	University of Denver\\
	2390 S. York St, Denver, CO 80210, USA}
\email{luke.hetzel@du.edu}
\begin{document}

\begin{abstract}
    We answer two questions of Kra, Moreira, Richter and Robertson regarding the existence of infinite sumsets of the form $B + C$ in dense and sparse sets of integers and the relation of sumsets to sets of recurrence. We then further generalize these results, yielding new characterizations of sets of multiple measurable and topological recurrence.
\end{abstract}
\maketitle

\section{Introduction}
\subsection{Historical Overview}

     For a subset $A \subset \N$ the \emph{upper Banach density of A} is given by 
    \[
        d^{*}(A) = \limsup_{N-M \to \infty} \frac{|A \cap [M, N]|}{N-M}.
    \]
    The celebrated Szemer\'edi theorem \cite{Szemeredi75} states that a set of positive upper Banach density contains arbitrarily long arithmetic progressions. This theorem of Szemer\'edi's and many of its proofs have sparked connections between seemingly unrelated areas of mathematics. Among the various proofs of this theorem, a notable one was due to Furstenberg \cite{Furstenberg_correspondence}.

    An important object that arises from the work of Furstenberg \cite{Furstenberg_correspondence} is the notion of set of measurable recurrence. We call a set $S \subset \N$ a \emph{set of measurable recurrence} (or \emph{set of recurrence} for short) if for every measure preserving system $(X, \mu, T)$ and every set $E$ of positive measure there exists an $n \in S$ with $\mu (E \cap T^{-n}E) > 0$. Here we say $(X, \mu, T)$ is a \emph{measure preserving system} if $(X, \mu)$ is a probability space and $T$ is a measure preserving transformation, i.e. $\mu(T^{-1} E) = \mu(E)$ for every measurable set $E$.
    
    Even though defined as an ergodic theoretic object, sets of recurrence are equivalent to a notion in additive combinatorics called intersective sets. A set $S \subset \N$ is \emph{intersective} if for every set $A \subset \N$ of positive upper Banach density $(A-A) \cap S \ne \varnothing$. Here  $A \pm B$ is given by $A \pm B = \{a \pm b: a \in A, b \in B\}$. It follows from Furstenberg's correspondence principle \cite{Forrest-recurrence-not-strong} and Bergelson's intersectivity lemma \cite{Bergelson_intersectivity} that $S$ is a set of recurrence if and only if it is an intersective set. A corollary of the Poincar\'e Recurrence Theorem is that the set of natural numbers is a set of recurrence. Other examples of sets of recurrence include $\{P(n): n \in \N\}$ where $P \in \Z[x]$ with $P(0) = 0$ and $\P \pm 1$ where $\P$ is the set of primes \cite{Furstenberg_correspondence, Sarkozy78, Sarkozy78b}.

These results opened up a new direction of research in which dynamical methods are used to address combinatorial and number theoretic problems. In particular, this approach is proven to be effective in uncovering combinatorial structure of sets of positive upper Banach density. Along this line, in \cite{Erdos_B+C_sumset}, Moreira, Richter, and Robertson used ergodic methods to show that every set of positive upper Banach density contains a set of the form $B+C$ where $ B, C \subset \N$ are infinite, confirming a long-standing conjecture of Erd\H{o}s. Then, together with Kra, they showed that for every $k \in \N$, every set of positive upper Banach density contains a subset of the form $B + B + t$ for some infinite set $B$ and $t \in \N$ \cite{B+B+t}.

\subsection{Sumsets and Sets of Measurable Recurrence}
    We say a set $S \subset \N$ is a \emph{set of strong recurrence}  if for every measure preserving system $(X, \mu, T)$ and every set $A$ of positive measure we have $\limsup_{n \in S} \mu(A \cap T^{-n}A) > 0$. Every set of strong recurrence is a set of recurrence. However the converse does not hold: Forrest \cite{Forrest-recurrence-not-strong} constructed an explicit example of a set of recurrence which is not a set of strong recurrence.    
    
    In their survey \cite{Kra-Moreira-Richter-Robertson-sumsets-survey}, Kra, Moreira, Richter, and Robertson proved the following connection between sets of strong recurrence and infinite sumsets.

    \begin{theorem}{\cite[Theorem 3.32]{Kra-Moreira-Richter-Robertson-sumsets-survey}}
    \label{thm:kmrr_strong_recurrence}
    Let $S$ be a set of strong recurrence. Then for every $A \subset \N$ with $d^{*}(A) > 0$ there exist infinite sets $B \subset S, C \subset A$ with 
        \begin{equation}\label{eq:B+C<}
        \{b + c : b \in B, c \in C, b< c\} \subset A.
        \end{equation}
\end{theorem}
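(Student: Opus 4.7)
The plan is to use Furstenberg's correspondence principle to reduce to an ergodic-theoretic problem, and then build the two sets by an interleaved greedy induction powered by strong recurrence. Invoking correspondence, there exist a measure preserving system $(X, \mu, T)$ and a set $E \subset X$ with $\mu(E) \geq d^*(A) > 0$ such that for every $n_1, \ldots, n_k \in \N$,
\[
d^*\Big(A \cap \bigcap_{i=1}^k (A - n_i)\Big) \geq \mu\Big(E \cap \bigcap_{i=1}^k T^{-n_i} E\Big).
\]
Consequently, as soon as a finite intersection of translates $T^{-b_i} E$ has positive measure, the corresponding intersection of translates of $A$ is an infinite subset of $A$.

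I will construct sequences $b_1 < c_1 < b_2 < c_2 < \cdots$ with $b_k \in S$, $c_k \in A$, $b_{k+1} > c_k$, $c_{k+1} > b_{k+1}$, and $b_i + c_j \in A$ whenever $i \leq j$, while maintaining the invariant that $\mu(E_k) > 0$, where $E_k := E \cap \bigcap_{i=1}^k T^{-b_i} E$ and $E_0 := E$. At stage $k+1$, apply strong recurrence of $S$ to the positive measure set $E_k$ in $(X, \mu, T)$ to obtain $\limsup_{n \in S} \mu(E_k \cap T^{-n} E_k) > 0$; pick any $n \in S$ with $n > c_k$ and $\mu(E_k \cap T^{-n} E_k) > 0$, and set $b_{k+1} := n$. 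The containment $E_k \cap T^{-n} E_k \subset E_{k+1}$ preserves the invariant. By correspondence, $A \cap \bigcap_{i=1}^{k+1} (A - b_i)$ then has positive upper Banach density, hence is infinite, and any sufficiently large element of it may be chosen as $c_{k+1}$. The strict interleaving forces $b_i > c_{i-1} \geq c_j$ whenever $i > j$, so the only pairs with $b_i < c_j$ are those with $i \leq j$, each of which satisfies $b_i + c_j \in A$ by construction.

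The main conceptual obstacle is recognizing that strong recurrence must be applied at each stage to the shrinking set $E_k$ rather than to $E$ itself; this is the precise reason ordinary recurrence is insufficient, since it would only produce a single $n$ with $\mu(E \cap T^{-n} E) > 0$ and no means of iterating. Fortunately this is immediate from the definition of strong recurrence, which requires the $\limsup$ property to hold for every positive measure set in every measure preserving system, and so applies equally well to each $E_k$.
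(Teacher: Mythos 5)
Your argument is correct, and its skeleton---Furstenberg correspondence, an interleaved choice of $b_1<c_1<b_2<c_2<\cdots$, and the key step of applying the recurrence hypothesis to the shrinking intersections $E_k=E\cap\bigcap_{i\le k}T^{-b_i}E$ rather than to $E$ itself---is essentially the argument this paper uses; note that the paper does not reprove the stated theorem (it is quoted from the survey) but instead proves the stronger \cref{measurable}, whose proof via \cref{lem:krecurrence} specializes at $k=1$ to exactly your construction. The one genuine difference is how the requirement $b_{k+1}>c_k$ is met: you extract arbitrarily large return times from the $\limsup$ in the definition of strong recurrence, which is fine, but your closing claim that ordinary recurrence ``would only produce a single $n$'' and gives ``no means of iterating'' is mistaken. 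Ordinary recurrence also applies to every $E_k$, and largeness of the return time is obtained from the standard fact that $S\setminus[0,c_k]$ is still a set of recurrence (removing a finite set cannot destroy recurrence; e.g.\ pass to the product with an irrational rotation and a short interval to kill the finitely many excluded times). This is precisely how \cref{lem:krecurrence} proceeds, and it is the whole point of \cref{measurable}: the strong recurrence hypothesis in the stated theorem can be weakened to mere recurrence, which answers \cref{ques:kmmr rec} affirmatively. So your proof is a correct proof of the stated theorem, but the extra strength of strong recurrence is only being used to sidestep a finite-removal observation that holds anyway.
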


It is shown in \cite{Kra-Moreira-Richter-Robertson-sumsets-survey} that \cref{thm:kmrr_strong_recurrence} is no longer true if one remove the condition $b < c$ in \eqref{eq:B+C<}. To see this, take $S$ to be the set of squares $\{n^2: n \in \N\}$ and $A = \N \setminus \bigcup_{k =1}^\infty (k^2 - \log(k), k^2 + \log(k))$. Furthermore, it was also pointed out in  \cite{Kra-Moreira-Richter-Robertson-sumsets-survey} that any set $S$ that satisfies the conclusion of \cref{thm:kmrr_strong_recurrence} must be a set of recurrence. This observation led Kra, Moreira, Richter and Robertson to pose a natural question: is strong recurrence assumption necessary for \cref{thm:kmrr_strong_recurrence} or could it be replaced with the weaker notion -- set of recurrence?

\begin{question}\cite[Question 3.34]{Kra-Moreira-Richter-Robertson-sumsets-survey}\label{ques:kmmr rec}
    Is it true that if $S$ is a set of recurrence and $A \subset \N$ such that $d^{*}(A) > 0$ then there exist infinite sets $B \subset S, C \subset A$ with 
        \[
        \{b + c: b \in B, c \in C, b < c \} \subset A ?
        \]
\end{question}

In \cref{sec_meas} we show that not only the answer to \cref{ques:kmmr rec} is positive, but also it follows from a more general theorem below regarding sets of multiple recurrence.

\begin{definition}
    A set $ S \subset \N$  is called a \emph{set of $k$-recurrence} if for every measure preserving system $(X, \mu, T)$ and for every $E \subset X$ with $\mu(E) > 0$, there exists an $n \in S$ such that,
    $$\mu(E \cap T^{-n}E \cap T^{-2n}E \cap \ldots \cap T^{-kn}E) >0.$$
\end{definition}

The notion of $k$-recurrence arises from the aforementioned work of Furstenberg \cite{Furstenberg_correspondence}. It is immediate that $1$-recurrence is the same as measurable recurrence.
Likewise, on the combinatorial side, the generalization of intersective sets is $k$-intersective sets. A set $S \subset \N$ is \emph{$k$-intersective} if for any $A \subset \N$ of positive upper Banach density, there exist $n \in S$ such that
\[
    A \cap A - n \cap \cdots \cap A - kn \neq \varnothing, 
\]
i.e. there is $a \in A, n \in S$ satisfying
\[
    a, a + n, \ldots, a + kn \in A.
\]
Similarly to the relation between measurable recurrence and intersectivity, a set $S$ is a set of $k$-recurrence if and only if it is $k$-intersective \cite{Bergelson_intersectivity, Furstenberg_correspondence}. This equivalence together with the fact that $\N$ is a set of $k$-recurrence for all $k \in \N$ \cite{Furstenberg_correspondence} implies Szemer\'edi's theorem.

All examples of sets of recurrence we mentioned before happen to be sets of $k$-recurrence for all $k$. However, this phenomenon is not true in general. Furstenberg \cite{recurrence_in_erg_theory_fur} constructed a set of $1$-recurrence which is not a set of $2$-recurrence. It is shown by Frantzikinakis, Lesigne, and Wierdl \cite{Frantzikinakis-Lesigne-Wierdl-k-not-k+1-recurrence} that for every $k$, there is a set of $k$-recurrence which is not a set of $(k+1)$-recurrence. 

Our first theorem shows that $k$-recurrence is equivalent to a seemingly stronger notion than $k$-intersectivity.

\begin{Maintheorem}\label{measurable}
    Let $S \subset \N$ and $k \in \N$. The following are equivalent.
    \begin{enumerate}
        \item \label{item:measurable_1} $S$ is a set of $k$-recurrence. 
        \item \label{item:measurable_2} For every set $A \subset \N$ with $d^{*}(A) > 0$, there exist infinite $B \subset S$ and $C \subset A$ such that 
        \[
         \{ib + c : b \in B, c \in C, b<c , 0 \leq i \leq k \} \subset A.
        \]
        \item \label{item:measurable_3} For every set $A \subset \N$ with $d^{*}(A) > 0$ there exist infinite $B \subset S$ and $C \subset A$ such that for every $m \in \N$ we have 
        \[
         \{ i_1 b_1 + i_2 b_2 + \ldots +i_m b_m + c: b_j \in B, c \in C,  b_1 < b_2 < \ldots < b_m < c, 0 \leq i_j \leq k, 1 \leq j \leq m \} \subset A.
        \]
    \end{enumerate}
\end{Maintheorem}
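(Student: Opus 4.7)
The easy directions $(3) \Rightarrow (2)$ and $(2) \Rightarrow (1)$ will come first. Specializing $(3)$ to $m=1$ gives $(2)$. For $(2) \Rightarrow (1)$, I take $A$ with $d^*(A) > 0$, apply $(2)$ to obtain infinite $B \subset S$ and $C \subset A$, and pick any $b \in B$ and $c \in C$ with $c > b$: the chain $c, c+b, \ldots, c+kb$ is a $(k+1)$-term arithmetic progression in $A$ with common difference $b \in S$, so $S$ is $k$-intersective and thus a set of $k$-recurrence.

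The main content is $(1) \Rightarrow (3)$. I plan to construct $B = \{b_1 < b_2 < \cdots\} \subset S$ and $C = \{c_1 < c_2 < \cdots\} \subset A$ by induction, ensuring that they interleave as $b_1 < c_1 < b_2 < c_2 < \cdots$. With this ordering, $b_j < c_n$ is equivalent to $j \le n$, so the conclusion of $(3)$ reduces to the requirement that, for every $n$,
\[
c_n \in A^{(n)} := \bigcap_{(i_1, \ldots, i_n) \in \{0, \ldots, k\}^n} \Bigl(A - \sum_{j=1}^n i_j b_j\Bigr),
\]
since allowing some $i_j = 0$ lets any subsum over a prescribed subset of $\{b_1,\dots,b_n\}$ be realized.

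I will apply Furstenberg's correspondence principle to $A$ to obtain a measure preserving system $(X, \mu, T)$ and a set $E \subset X$ with $\mu(E) = d^*(A)$ satisfying
\[
d^*\Bigl(A \cap \bigcap_{s}(A - m_s)\Bigr) \ge \mu\Bigl(E \cap \bigcap_{s} T^{-m_s} E\Bigr)
\]
for every finite family of integers $m_s$. Setting $E^{(n)} := \bigcap_{(i_1, \ldots, i_n) \in \{0, \ldots, k\}^n} T^{-\sum_j i_j b_j} E$, I will maintain the inductive invariant $\mu(E^{(n)}) > 0$. The telescoping identity $E^{(n)} = E^{(n-1)} \cap T^{-b_n} E^{(n-1)} \cap \cdots \cap T^{-k b_n} E^{(n-1)}$ shows that applying the $k$-recurrence of $S$ to $(X,\mu,T)$ and $E^{(n-1)}$ produces some $b_n \in S$ with $\mu(E^{(n)}) > 0$; the correspondence then gives $d^*(A^{(n)}) > 0$, so I can select $c_n \in A^{(n)}$ with $c_n > b_n$.

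The main obstacle is forcing $b_n > c_{n-1}$ so that the interleaving is preserved. This reduces to an auxiliary claim: \emph{every cofinite subset of a set of $k$-recurrence is itself a set of $k$-recurrence.} I would establish this by a standard product trick: for a finite $F \subset \N$ and $M > k\max F$, the cyclic system $(\Z/M\Z, x \mapsto x+1)$ together with the singleton $\{0\}$ makes $\nu(\{0\} \cap T^{-n}\{0\} \cap \cdots \cap T^{-kn}\{0\}) = 0$ for every $n \in F$; taking the product with any hypothetical MPS witnessing failure of $S \setminus F$ then contradicts the $k$-recurrence of $S$. Applied with $F = S \cap [1, c_{n-1}]$, the claim supplies $b_n \in S \setminus F$, i.e.\ $b_n > c_{n-1}$, closing the induction and yielding the $B$ and $C$ required for $(3)$.
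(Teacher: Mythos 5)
Your proposal is correct and follows essentially the same route as the paper: the easy implications are handled identically, and for \eqref{item:measurable_1}$\Rightarrow$\eqref{item:measurable_3} you use Furstenberg's correspondence principle together with an iterated application of $k$-recurrence to nested intersections (your sets $E^{(n)}$ are exactly the sets $E_m$ built in the paper's Lemma~\ref{lem:krecurrence}), interleaving the choices of $b_n$ and $c_n$. The only cosmetic differences are organizational (the paper first extracts the whole sequence $t_1<t_2<\cdots$ in a separate lemma and then subselects the $b_n$'s), and that you supply, via the product with a finite rotation, a proof of the fact that deleting finitely many elements from a set of $k$-recurrence preserves $k$-recurrence, which the paper uses without proof.
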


The definition of $k$-intersectivity is recovered if we take $B$ and $C$ in \eqref{item:measurable_2} to be singletons. To illustrate the configurations appearing in \eqref{item:measurable_3}, we provide here some examples. If $m=2, k = 1$, \eqref{item:measurable_3} gives that $A$ contains
\[
    \{ b_1 + c, b_1 + b_2 + c: b_1, b_2 \in B, c \in C, b_1< b_2 < c \} .
\]
If $m = 2, k = 2$, $A$ contains
\[
    \{b_1 + c, 2b_1 + c, b_1 + b_2 +c,2b_1 + b_2 + c, b_1 + 2b_2 +c, 2b_1 + 2b_2 +c : b_1, b_2 \in B, c \in C, b_1< b_2<c \}
\]
In particular, by taking $k = 1$ in \cref{measurable}, the set $A$ would contain $\{b + c: b \in B, c \in C, b < c\}$, yielding the following corollary and positive answer to \cref{ques:kmmr rec}.

\begin{corollary}\label{3.34 answer}
    A set $S \subset \N$ is a set of measurable recurrence if and only if for every $A \subset \N$ with $d^{*}(A) > 0$ there exist infinite subsets $B \subset S, C \subset A$ such that
        \[
        \{b + c : b \in B, c \in C, b< c \} \subset A.
        \]
    
\end{corollary}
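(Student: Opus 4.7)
The corollary is stated as a biconditional, so I would prove the two implications separately, and both directions are short because the heavy lifting is already done by \cref{measurable} and by the classical equivalence between sets of measurable recurrence and intersective sets.

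For the forward direction, I would simply specialize \cref{measurable} to $k=1$. Under that specialization, part \eqref{item:measurable_2} asserts that for every $A \subset \N$ with $d^*(A) > 0$ there exist infinite $B \subset S$ and $C \subset A$ with
\[
    \{ib + c : b \in B,\ c \in C,\ b < c,\ 0 \le i \le 1\} \subset A.
\]
The $i = 0$ part is already automatic since $C \subset A$, and the $i = 1$ part is exactly the conclusion
\[
    \{b + c : b \in B,\ c \in C,\ b < c\} \subset A
\]
that we want. So this direction is a one-line deduction from the main theorem.

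For the reverse direction, I would argue that the sumset conclusion forces $S$ to be intersective, and then invoke the classical equivalence (Furstenberg correspondence together with Bergelson's intersectivity lemma) between intersectivity and measurable recurrence, which was recalled in the introduction. Concretely, suppose $A \subset \N$ has $d^*(A) > 0$ and let $B \subset S$, $C \subset A$ be the infinite sets provided by the hypothesis. Pick any $b_0 \in B$ and then any $c_0 \in C$ with $c_0 > b_0$ (which exists since $C$ is infinite). Then $c_0 \in A$ and $b_0 + c_0 \in A$, so
\[
    b_0 = (b_0 + c_0) - c_0 \in (A - A) \cap S.
\]
Hence $(A - A) \cap S \ne \varnothing$ for every $A$ of positive upper Banach density, so $S$ is intersective and therefore a set of measurable recurrence.

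There is no real obstacle here since the content is in \cref{measurable}; the only thing to be slightly careful about is making explicit the backward direction (which is already flagged in the passage from \cite{Kra-Moreira-Richter-Robertson-sumsets-survey} quoted before \cref{ques:kmmr rec}) rather than treating the corollary as a one-way implication.
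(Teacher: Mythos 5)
Your proposal is correct and matches the paper's route: the paper deduces the corollary by taking $k=1$ in \cref{measurable}, and the backward direction you spell out (extracting $b_0 \in (A-A)\cap S$ and invoking the equivalence of intersectivity with measurable recurrence) is exactly the argument the paper uses to prove \eqref{item:measurable_2}$\Rightarrow$\eqref{item:measurable_1} inside that theorem, specialized to $k=1$.
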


\subsection{Sumsets and Sets of Topological Recurrence}
In this section, we turn our attention to the relationship between infinite sumsets and sets of topological recurrence. A \emph{topological dynamical system} is a pair $(X, T)$ where $X$ is a compact metric space and $T: X \to X$ is a continuous map. The system $(X, T)$ is \emph{minimal} if it contains no non-trivial subsystems, or equivalently, if the orbit of every $x \in X$, $\{T^n x: x \in \N \cup \{0\}\}$ is dense in $X$.

The topological counterparts of sets measurable recurrence are sets of topological recurrence. We say $S \subset \N$ is \emph{set of topological recurrence} if for every minimal system $(X,T)$, and every nonempty open set $U \subset X$, there exists an  $n \in S$ such that $U \cap T^{-n}U \ne \varnothing$. More generally, we have the notion of \emph{sets of topological $k$-recurrence}.

\begin{definition}
    $S \subset \N$ is called a \emph{set of topological $k$-recurrence} if for every minimal system $(X, T)$, and every nonempty open set $U \subset X$, there exists $n \in S$ such that $U \cap T^{-n} U \cap \ldots \cap T^{-kn} U \neq \varnothing$.
\end{definition}
 
Since every minimal system admits an invariant measure of full support, every set of $k$-recurrence is a set of topological $k$-recurrence. An example of Kriz \cite{Krisz_example}  shows the converse is false. Parallel to the equivalence between $k$-recurrence and $k$-intersectivity, a set $S \subseteq \N$ is a set of topological $k$-recurrence if and only if it is \emph{chromatically $k$-intersective}, namely for every finite coloring $\N = \bigcup_{i=1}^{\ell} A_i$, there exists a color $A_i$ and an $n \in S$ such that $A_i \cap (A_i - n) \cap \ldots \cap (A_i - kn) \neq \varnothing$ (see \cite{McCutcheon-elemental-methods-ergodic}).

    We say a set $S\subset \N$ is \emph{syndetic} if there exists $ n \in \N$ such that $\bigcup_{i = 0}^n (S-i) \supset \N$. A set $T \subset \N$ is \emph{thick} if for every $n \in \N$ there exists a $t \in T$ with $t, t+1 , t+2, \ldots t+n \in T$. A set $A \subset \N$ is \emph{piecewise syndetic} if it is the intersection of a syndetic set and a thick set. It is well-known (for example, see \cite{McCutcheon-elemental-methods-ergodic}) that $S$ is chromatically $k$-intersective if and only if for every piecewise syndetic set $A$, there exists $n \in S$ such that 
    \[
        A \cap (A - n) \cap \cdots \cap (A - kn) \neq \varnothing.
    \]

    Sets of topological recurrence provide a similar bridge between problems about colorings and topological dynamics as sets of measurable recurrence do between problems about density and measurable dynamics. As such, one might ask if we can achieve a similar result to \cref{measurable} in topological setting. In \cref{sec_top} we show it is possible and have the following theorem.

    \begin{Maintheorem}\label{topological}
    Let $S \subset \N$ and $k \in \N$. The following are equivalent.
        \begin{enumerate}
            \item \label{item:topological_1} $S$ is a set of topological $k$-recurrence.
            \item \label{item:topological_2} For every finite coloring $\N = \bigcup_{i=1}^\ell A_i$ there exist $j$, $1 \leq j \leq \ell$ and infinite $B \subset S, C \subset A_j$ with 
                \[
                \{ib + c: b \in B, c \in C, b < c, 0 \leq i \leq k \} \subset A_j.
                \]
            \item \label{item:topological_3} For every piecewise syndetic set $A \subset \N$ there exist infinite $B \subset S, C \subset A$ with 
                \[
                 \{ib + c: b \in B, c \in C, b < c, 0 \leq i \leq k \} \subset A.
                \]
            \item \label{item:topological_4} For every piecewise syndetic set $A \subset \N$ there exists infinite $B \subset S, C \subset A$ with for every $m \in \N$ we have   
            \[
                \{ i_1 b_1 + i_2 b_2 + \ldots + i_m b_m + c : b_j \in B, c \in C, b_1 < \ldots < b_m < c, 0 \leq i_j \leq k, 1 \leq j \leq m\} \subset A.
            \]
        \end{enumerate}
\end{Maintheorem}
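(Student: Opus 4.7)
The plan is to establish the cycle $(1) \Rightarrow (4) \Rightarrow (3) \Rightarrow (2) \Rightarrow (1)$. The directions $(4) \Rightarrow (3)$ (take $m = 1$) and $(3) \Rightarrow (2)$ (in any finite coloring of $\N$ at least one color class is piecewise syndetic) are immediate. For $(2) \Rightarrow (1)$, given a finite coloring, $(2)$ produces an index $j$ and infinite sets $B \subset S$ and $C \subset A_j$; picking any $b \in B$ and $c \in C$ with $b < c$ yields the arithmetic progression $c, c+b, \ldots, c+kb \subset A_j$ with common difference $b \in S$, which is exactly the chromatic $k$-intersectivity characterization of topological $k$-recurrence recalled in the introduction.

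The content of the theorem lies in $(1) \Rightarrow (4)$. Given a piecewise syndetic $A \subset \N$, I would pass to the symbolic system $X_A = \overline{\{T^n 1_A : n \in \N\}} \subset \{0,1\}^{\N}$ under the left shift $T$. Piecewise syndeticity of $A$ is equivalent to the existence of a uniformly recurrent point $y^{*} \in X_A$ with $y^{*}(0) = 1$; let $Y$ be the orbit closure of $y^{*}$, a minimal subsystem of $X_A$, and let $U_0 = \{y \in Y : y(0) = 1\}$, which is clopen and nonempty. The strategy is then to build inductively a sequence $b_1 < c_1 < b_2 < c_2 < \cdots$ with $b_\ell \in S$, together with a nested chain of nonempty open sets $U_0 \supset U_1 \supset U_2 \supset \cdots$ in $Y$ defined by
\[
    U_\ell = U_{\ell-1} \cap T^{-b_\ell} U_{\ell-1} \cap \cdots \cap T^{-k b_\ell} U_{\ell-1},
\]
and to arrange that $c_\ell + \sum_{s=1}^{\ell} i_s b_s \in A$ for every $(i_1, \ldots, i_\ell) \in \{0, \ldots, k\}^\ell$. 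At stage $\ell$, once $U_\ell$ is built, I pick any $y_\ell \in U_\ell$; since $y_\ell$ lies in the orbit closure of $1_A$, a sufficiently late shift $T^{c_\ell} 1_A$ agrees with $y_\ell$ on the finite window $[0, k(b_1 + \cdots + b_\ell)]$, and the definition of $U_\ell$ forces $y_\ell(\sum_s i_s b_s) = 1$ for all admissible $i_s$, yielding the required membership $c_\ell + \sum_s i_s b_s \in A$. I choose $c_\ell > b_\ell$, and then pick the next $b_{\ell+1} \in S$ larger than $c_\ell$ with $U_{\ell+1}$ nonempty by invoking topological $k$-recurrence on the open set $U_\ell$ in the minimal system $Y$.

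The one nontrivial point, and the main obstacle, is the requirement that $b_{\ell+1}$ be arbitrarily large: the bare definition of topological $k$-recurrence only supplies \emph{some} $n \in S$ witnessing recurrence for $U_\ell$, without a size guarantee. I would handle this with a short persistence lemma: if $S$ is a set of topological $k$-recurrence and $F \subset \N$ is finite, then $S \setminus F$ is also a set of topological $k$-recurrence. The proof is a direct contradiction: consider the minimal rotation $(\Z/N\Z, +1)$ with $N = \operatorname{lcm}\{f+1 : f \in F\}$ and $V = \{0\}$; for any $n \in F$ one has $0 < n < N$, hence $N \nmid n$, and so the intersection $V \cap T^{-n}V \cap \cdots \cap T^{-kn}V$ is empty, meaning no element of $F$ can witness $k$-recurrence in this system. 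Applying the lemma with $F = S \cap [1, c_\ell]$ at each stage of the induction delivers the desired $b_{\ell+1} > c_\ell$.

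Finally, with $B = \{b_\ell\}_{\ell \ge 1}$ and $C = \{c_\ell\}_{\ell \ge 1}$, the verification of $(4)$ goes as follows: given $m$, indices $j_1 < \cdots < j_m$, and $c_r \in C$ with $c_r > b_{j_m}$, the interleaving $b_r < c_r < b_{r+1}$ built into the construction forces $j_m \le r$, so the sum $\sum_{s=1}^m i_s b_{j_s} + c_r$ can be written as $c_r + \sum_{s=1}^{r} i'_s b_s$ with each $i'_s \in \{0, \ldots, k\}$ (set $i'_{j_t} = i_t$ and the remaining $i'_s = 0$), which lies in $A$ by the defining property of $c_r$.
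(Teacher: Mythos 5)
Your outline follows the same route as the paper: the three easy implications are handled identically, and for \eqref{item:topological_1} $\Rightarrow$ \eqref{item:topological_4} both you and the paper pass to the shift orbit closure of $1_A$, locate a minimal subsystem $Y$ meeting the cylinder $\{y: y(0)=1\}$ (the paper proves the existence of a syndetic point by a pigeonhole argument where you quote the standard equivalence with piecewise syndeticity, which is acceptable), and then iterate topological $k$-recurrence on the nested open sets $U_\ell$ while interleaving the $b$'s and $c$'s. However, two of your justifications have genuine gaps. The step ``since $y_\ell$ lies in the orbit closure of $1_A$, a sufficiently late shift $T^{c_\ell}1_A$ agrees with $y_\ell$ on the window'' does not follow from orbit-closure membership alone: that only yields \emph{some} $c_\ell$, which may be $\leq b_\ell$ (for instance, if $y_\ell$ is an isolated point of the orbit closure, equal to a single shift $T^{n_0}1_A$ with small $n_0$, there is no a priori reason its window pattern recurs later in $1_A$). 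What makes the claim true is that $y_\ell$ lies in the minimal system $Y$ and is therefore recurrent: one takes a return time $m > b_\ell$ with $T^m y_\ell$ agreeing with $y_\ell$ on the window and composes it with one occurrence of the correspondingly longer window in $1_A$. The paper devotes an explicit case analysis (limit point of the orbit versus a point of the orbit, using recurrence of $w$ in $Y$) to exactly this point, and your write-up needs the analogous argument.

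Second, your persistence lemma ($S$ a set of topological $k$-recurrence and $F$ finite implies $S \setminus F$ is one) is true, but the proof you sketch does not establish it: checking that $V = \{0\}$ in the rotation on $\Z/N\Z$ has no return times in $F$ only shows that $F$ itself is not a set of topological $k$-recurrence, not that $S \setminus F$ is one. To conclude, you must combine a hypothetical bad pair $(X, U)$ for $S \setminus F$ with the rotation into a \emph{single} minimal system and open set, e.g.\ pass to a minimal subsystem of $X \times \Z/N\Z$ through a point of $U \times \{0\}$ --- which requires knowing every point of this product is uniformly recurrent, a consequence of the structure of $(X, T^N)$ for minimal $(X,T)$ but not automatic, since the product of a minimal system with a finite rotation need not be minimal --- or, more simply, argue through chromatic $k$-intersectivity and refine a putative bad coloring by residue classes modulo $N$. (The paper itself only asserts that $b_2 \in S$ can be chosen larger than $c_1$, so isolating this lemma is a good instinct; the proof just has to actually be supplied.)
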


\subsection{Sumsets in Sets of Zero Banach Density}

    Many combinatorial properties of $\N$ not only carry over to dense sets (sets of positive upper Banach density) but also to sufficiently nice sparse sets (sets of zero Banach density).
    In \cite{Green_Roththeoremprime}, Green devised a transference principle to deduce from Roth's theorem \cite{Roth-theorem} that every set which is dense relative to the set of primes, $\P$, contains three-term arithmetic progressions. This transference principle was a precursor to another one which enabled Green and Tao \cite{Green-Tao_thm} to prove that every dense subset of the primes contains arbitrarily long arithmetic progressions.

    Since then, many variants of the transference principle have been devised	to prove combinatorial theorems in sparse sets of integers such as the	squares \cite{broPre} or the sums of two squares \cite{matt}. Given this backdrop, it was asked in \cite{Kra-Moreira-Richter-Robertson-sumsets-survey} whether the phenomenon of infinite sumsets could be transferred from dense subsets of $\N$ to the set of primes or other sufficiently nice sets of zero Banach density.

    For $A, S \subset \N$, we define the \emph{lower relative density of $A$ in $S$} to be 
        \[
        \underline{d}_S (A) = \liminf_{N \to \infty} \frac{|A \cap S \cap [N]|}{|S \cap [N]|},
        \]
    where $[N] = \{1, 2, \ldots, N \}$. In the case where $S = \N$, lower relative density is simply called \emph{lower density}.

    \begin{question}\label{KMRR4_7}{\cite[Question 4.7]{Kra-Moreira-Richter-Robertson-sumsets-survey}}
    Is there a set $F \subset \N$ of zero upper Banach density such that for every $A \subset F$ with $\underline{d}_F(A) > 0$, there exist infinite $B, C \subset \N$ satisfying $B + C \subset A$?
\end{question}

    A negative answer to \cref{KMRR4_7} would indicate that the largeness of the ambient set $F$ is essential for the containment of an infinite sumset $B + C$. On the other hand, if the answer is positive, it will imply the existence of this structure does not depend on the size of $F$ but possibly on the regularity of its composition. In the same paper, it was observed that a positive answer to the next question will give a negative answer to \cref{KMRR4_7}.

    \begin{question}{\cite[Question 4.8]{Kra-Moreira-Richter-Robertson-sumsets-survey}}\label{0densityquestion}
        Is it true that if $F \subset \N$ has zero upper Banach density, there exists a subset $A \subset F$ with $\underline{d}_F(A) > 0$ such that for all but finitely many $t \in \N$ we have
    \[
    \underline{d}_F (A) = \underline{d}_F (A \setminus (A- t )) ?
    \]

\end{question}

The answer to \cref{0densityquestion} turns out to be negative and this is a corollary of our next theorem.

\begin{Maintheorem}\label{Y_thing}
    There exists a set $F$ of zero Banach density such that for every $A \subset F$ with $\underline{d}_F(A) > 0$ and every sequence of natural numbers $(N_t)_t$, and every $M \in \N$, 
    \[
        Y = A \setminus \bigcup_{t \geq M} \left(  (A - t) \cap [N_t, \infty) \right)
    \]
    has zero relative density in $F$. 
\end{Maintheorem}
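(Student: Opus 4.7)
\emph{Plan.} The plan is to construct a sparse set $F$ with enough local additive structure that, for every positive relative density $A\subset F$, almost every element of $A$ is ``witnessed'' by a small, scale-recurring difference $t$ that survives any fixed sequence $(N_t)_t$. Concretely, I would take $F=\bigcup_{n\geq 1}(y_n+E_n)$, where the centers $y_n$ grow very rapidly so that distinct blocks are far apart, and each $E_n\subset[0,L_n]$ is a finite configuration with $|E_n|/L_n\to 0$; the rapid growth of $y_n$ together with the local sparseness of $E_n$ will force $F$ to have zero upper Banach density. The design principle for $E_n$ is that every positive integer $t$ arises as a difference of two elements of $E_n$ for infinitely many $n$, so that each fixed $t$ has witness pairs placed at arbitrarily large scales $y_n$. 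A natural first try is to let $E_n$ carry an AP-like or geometric structure whose internal parameters vary with $n$ but revisit every value infinitely often.

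Given $A\subset F$ with $\underline d_F(A)\geq \alpha>0$, a Fubini/averaging argument over the block decomposition of $F$ produces a positive-density family $\mathcal N$ of indices for which $|A\cap(y_n+E_n)|\geq(\alpha/2)|E_n|$. A pigeonhole inside each such block forces the existence of $a,a'\in A$ with $a'-a=t$ for some $t$ in a finite set $D_\alpha$ depending only on $\alpha$ and the geometry of $E_n$. Because the same difference $t\in D_\alpha$ recurs in infinitely many blocks at arbitrarily large $y_{n'}$ and $N_t<\infty$ is fixed, for all but finitely many such $n'$ we have $y_{n'}\geq N_t$, hence any witness $a$ in such a block satisfies $a\geq N_t$ automatically and so lies in $A\setminus Y$.

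The main obstacle is upgrading ``a positive-density subset of $A$ lies outside $Y$'' to ``$Y$ has \emph{zero} relative density''. A single pigeonhole pass typically leaves up to a constant fraction of each block unwitnessed, giving only a bound of the form $d_F(Y)\leq(1-c)\alpha$ rather than $0$. To close this gap I would design $E_n$ with a nested or self-similar structure so that any element of $A\cap(y_n+E_n)$ failing to have a close partner at one sub-scale acquires one at a finer sub-scale, with the ``orphan fraction'' per block tending to zero as $n\to\infty$ uniformly in $\alpha>0$. Calibrating $E_n$ and $y_n$ so that this refined pigeonhole actually yields zero relative density — rather than just small density — is the crux of the construction, and I expect the paper's argument either iterates the pigeonhole over a layered structure of $F$ or chains witnesses across consecutive blocks to absorb the leftover elements.
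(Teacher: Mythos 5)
There is a genuine gap, and you have in fact flagged it yourself: your plan only shows that a positive proportion of each dense block of $A$ gets removed (giving $\underline d_F(Y)\leq(1-c)\alpha$), and the ``crux'' you defer --- upgrading this to zero relative density --- is exactly the content of the theorem, so the proposal is a program rather than a proof. Moreover, the mechanism you propose (pigeonhole inside dense blocks to find pairs of $A$ at a difference lying in a fixed finite set $D_\alpha$, and recurrence of that difference across blocks to beat the threshold $N_t$) is not the idea that closes the gap, and it forces you to use the density of $A$ in a way that caps you at a constant-fraction loss per block.

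The paper's argument is different and does not use $\underline d_F(A)>0$ at all. Take $F=FS(\{4^n\})\cup\{1\}$, whose indicator is a limit of self-similar words $W_{k+1}=W_k0^{\ast}W_k$, so $F$ decomposes into widely separated copies of $W_{N+2}$ of length $N^*$ carrying $2^{N+2}$ ones, while any window of length at most $|W_\ell|$ meets at most about $2^{\ell+1}$ ones. Given $M$ and $(N_t)_t$, only the finitely many thresholds $N_t$ with $M\leq t\leq N^*$ are relevant inside a single copy, so one simply looks at copies of $W_{N+2}$ occurring beyond $L=\max\{N_t: t\leq N^*\}$. In such a copy, if $a_1<a_2$ are both in $Y$, then $t=a_2-a_1\leq N^*$ and $a_1\geq N_t$, so $a_1\in (A-t)\cap[N_t,\infty)$ would have been removed unless $t<M$; hence all survivors in that copy lie in a window of length less than $M\leq|W_\ell|$ and number at most $2^{\ell+1}$, against $2^{N+2}$ elements of $F$ in the copy. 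Letting $N\to\infty$ gives $\underline d_F(Y)=0$. The two points you are missing are: (i) the removing difference $t$ is taken to be the gap between two surviving elements of $A$ themselves (no pre-chosen difference set $D_\alpha$, no recurrence of a fixed $t$ across scales, no density hypothesis), and (ii) since $t$ is bounded by the block length, one defeats the arbitrary sequence $(N_t)$ just by waiting past a single finite maximum $L$; the hierarchical IP structure then makes the surviving $M$-window negligible within each block, which is what turns ``a positive fraction is removed'' into ``almost everything is removed.''
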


\begin{corollary}\label{0density_theorem}
    There exists a set $F \subset \N$ of zero upper Banach density such that for every set $A \subset F$ with $\underline{d}_F(A) > 0$ there exist infinitely many $t \in \N$ for which $\underline{d}_F (A \setminus (A-t)) < \underline{d}_F (A)$. 
\end{corollary}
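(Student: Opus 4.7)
The plan is to argue by contradiction using Theorem \ref{Y_thing} for the set $F$ it produces. If \cref{0density_theorem} fails, then some $A \subset F$ with $\alpha := \underline{d}_F(A) > 0$ satisfies $\underline{d}_F(A \setminus (A-t)) < \alpha$ for only finitely many $t$; monotonicity of $\underline{d}_F$ (from $A \setminus (A-t) \subset A$) then upgrades this to the equality $\underline{d}_F(A \setminus (A-t)) = \alpha$ for every $t \geq M$ and some $M \geq 1$. The goal is to exploit these equalities to pick $(N_t)_{t \geq M}$ so that the set $Y$ in Theorem \ref{Y_thing} retains positive relative density along a subsequence, contradicting the theorem.

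To construct $(N_t)$, I fix a summable sequence $\eta_t := \alpha \cdot 2^{-t-2}$ with $\sum_{t \geq M}\eta_t \leq \alpha/4$, and for each $t \geq M$ use the liminf equality $\underline{d}_F(A \setminus (A-t)) = \alpha$ to produce a threshold $\tilde N_t$ (nondecreasing in $t$ after enlargement) past which $|(A \setminus (A-t)) \cap F \cap [N]|/|F \cap [N]| \geq \alpha - \eta_t$. Set $N_M := \tilde N_M$; inductively, once $N_M,\dots,N_T$ are defined, use $\underline{d}_F(A) = \alpha$ to select a witness $\hat N_T \geq N_T$ where $|A \cap F \cap [\hat N_T]|/|F \cap [\hat N_T]|$ lies in $[\alpha - 1/T,\, \alpha + \alpha/(4(T - M + 1))]$ (the lower bound is eventual, and the upper bound is achieved on a subsequence approaching the liminf), then set $N_{T+1} := \max(\tilde N_{T+1},\, \hat N_T + 1)$. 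At $\hat N_T$ only shifts $t \in [M,T]$ contribute to $A \setminus Y$, and for each such $t$ the identity $|A \cap (A-t) \cap F| = |A \cap F| - |(A \setminus (A-t)) \cap F|$ bounds $|A \cap (A-t) \cap F \cap [\hat N_T]|/|F \cap [\hat N_T]|$ by $\alpha/(4(T-M+1)) + \eta_t$. Summing telescopes the first term to $\alpha/4$ and the second to at most $\alpha/4$, so $|(A \setminus Y) \cap F \cap [\hat N_T]|/|F \cap [\hat N_T]| \leq \alpha/2$, whence $|Y \cap F \cap [\hat N_T]|/|F \cap [\hat N_T]| \geq \alpha/2 - 1/T > \alpha/4$ for large $T$, contradicting the vanishing of the relative density of $Y$ in $F$.

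The main obstacle is the joint bookkeeping in the construction: the witness tolerance $\alpha/(4(T - M + 1))$ must shrink fast enough to cancel the $T - M + 1$ summed error terms, while the $\eta_t$'s remain summable and the thresholds $\tilde N_t$ are respected. These three requirements dovetail because the liminf definition simultaneously supplies eventual pointwise tail bounds for each $A \setminus (A-t)$ and arbitrary subsequential proximity of $|A \cap F|/|F|$ to $\alpha$ for the witness selection; once they are aligned, the rest is a direct counting estimate.
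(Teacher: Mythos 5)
Your overall strategy---argue by contradiction from \cref{Y_thing}, converting the hypothetical equalities $\underline{d}_F(A\setminus(A-t))=\underline{d}_F(A)=\alpha$ for all $t\ge M$ into a choice of thresholds $(N_t)_{t\ge M}$ for which $Y$ stays dense---is the same as the paper's (this is exactly the content of \cref{4_8implies_not4_7}), and your counting at the witness scales $\hat N_T$ is correct. The gap is in the final step. What you actually prove is that $|Y\cap F\cap[\hat N_T]|/|F\cap[\hat N_T]|\ge \alpha/4$ along one subsequence $\hat N_T\to\infty$, i.e.\ that $Y$ has positive \emph{upper} relative density in $F$. But \cref{Y_thing}, with the paper's conventions (only the lower relative density $\underline{d}_F$ is defined, and the proof of the theorem concludes by showing $\liminf_{n}|Y\cap F\cap[n]|/|F\cap[n]|=0$), asserts that the \emph{lower} relative density of $Y$ is zero. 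Positive density along a subsequence is perfectly compatible with $\underline{d}_F(Y)=0$, so no contradiction is reached. This is precisely why \cref{4_8implies_not4_7} is set up to produce a bound valid at \emph{every} sufficiently large scale, yielding $\underline{d}_F(Y)\ge 2\delta/3>0$, which does contradict $\underline{d}_F(Y)=0$.

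The retreat to a subsequence is not incidental: at scales $n$ where $|A\cap F\cap[n]|/|F\cap[n]|$ greatly exceeds $\alpha$, the identity $|A\cap(A-t)\cap F\cap[n]|=|A\cap F\cap[n]|-|(A\setminus(A-t))\cap F\cap[n]|$ no longer forces the removed fraction to be small, which is why you restricted to witness scales where the density of $A$ is within $\alpha/(4(T-M+1))$ of its liminf. To close the argument you must do one of two things: either (i) strengthen what you extract from \cref{Y_thing}---its proof in fact shows that every copy of $W_{N+2}$ occurring past the $L$th one retains at most $2^{\ell+1}$ ones in $1_Y$, so the ratio $|Y\cap F\cap[n]|/|F\cap[n]|$ is small for \emph{all} large $n$ and the upper relative density of $Y$ vanishes, in which case your subsequence bound does give a contradiction, but this stronger statement is not what the theorem as stated provides and would have to be justified; or (ii) follow the route of \cref{4_8implies_not4_7} and establish a lower bound for $Y$ at every sufficiently large scale, which requires dealing with the scales where $A$ is much denser than $\alpha$ rather than avoiding them.
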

 
 As \cref{0densityquestion} has a negative answer, it has no implication on \cref{KMRR4_7} and so the latter is still open.

\textbf{Acknowledgments.}
We thank Anh Le for his valuable guidance and feedback. We also thank Ethan Ackelsberg, Evans Hedges, Bryna Kra, Ronnie Pavlov, Florian Richter, Alex Sedlmayr and Casey Schlortt for their insight and helpful conversations.

\section{Sets of Measurable Recurrence}\label{sec_meas}
    In this section we prove \cref{measurable}. In order to do this we first prove a technical lemma. 

\begin{lemma}\label{lem:krecurrence}
    Let $S$ be a set of $k$-recurrence. Then for every measure preserving system $(X, \mu, T)$ and for every $E \subset X$ with $\mu(E) >0$, there exists an infinite sequence $t_1 < t_2 < \ldots  \in S$ such that for each $ m \in \N$, if $F_m = \{ i_1 t_{j_1} + i_2 t_{j_2} + \ldots + i_m t_{j_m} : 0 \leq i_l \leq k, j_1 < j_2 < \ldots < j_m \}$ then
        \[
        \mu \left( E \cap \bigcap_{n \in F_m} T^{-n}E \right) >0.
        \]
\end{lemma}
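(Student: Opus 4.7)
My plan is to construct the sequence $t_1 < t_2 < \cdots$ in $S$ inductively by iterating the $k$-recurrence hypothesis, and to derive the infinite-intersection positivity via a monotone limit. Set $E^{(0)} := E$. Given $t_1 < \cdots < t_{n-1}$ in $S$ and $E^{(n-1)} \subseteq E$ with $\mu(E^{(n-1)}) > 0$, apply the $k$-recurrence of $S$ to $E^{(n-1)}$ in $(X, \mu, T)$ to obtain $t_n \in S$ with $t_n > t_{n-1}$ (which one may insist on because $S$ minus a finite set is still a set of $k$-recurrence, as a short coupling of $(X,\mu,T)$ with a small rotation on $\T$ verifies) such that
\[
E^{(n)} \;:=\; \bigcap_{i=0}^{k} T^{-i t_n} E^{(n-1)}
\]
has positive measure. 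A routine induction then yields the key identity
\[
E^{(n)} \;=\; \bigcap_{(i_1, \ldots, i_n) \in \{0, \ldots, k\}^n} T^{-(i_1 t_1 + \cdots + i_n t_n)} E.
\]

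Fix $m \in \N$. For any $n' = \sum_{l=1}^{m} i_l t_{j_l} \in F_m$, setting $N := \max_l j_l$ and padding $(i_l)$ to a length-$N$ tuple with zeros exhibits $n'$ as one of the exponents appearing in the formula for $E^{(N)}$, so $E^{(N)} \subseteq T^{-n'} E$. Letting $F_m^{(N)} := \{n' \in F_m : \max_l j_l \le N\}$, a finite subset of $F_m$, this gives
\[
\mu\!\left(E \cap \bigcap_{n' \in F_m^{(N)}} T^{-n'} E\right) \;\ge\; \mu(E^{(N)}) \;>\; 0 \qquad \text{for every } N.
\]
Since $F_m = \bigcup_N F_m^{(N)}$ is an increasing union, the target $E \cap \bigcap_{n' \in F_m} T^{-n'} E$ is the decreasing limit as $N \to \infty$ of these finite subintersections, so its measure is at least $\lim_N \mu(E^{(N)}) = \inf_n \mu(E^{(n)})$.

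The main obstacle is securing a uniform positive lower bound $\inf_n \mu(E^{(n)}) > 0$: the bare $k$-recurrence hypothesis only supplies positivity at each step and a priori allows $\mu(E^{(n)})$ to decay to zero. To close the gap, I would strengthen the inductive step so that at stage $n$ the chosen $t_n \in S$ additionally satisfies $\mu(E^{(n)}) \ge \mu(E^{(n-1)}) - 2^{-n}\mu(E)$, making the total decrement summable and forcing $\inf_n \mu(E^{(n)}) \ge \mu(E)/2 > 0$. Producing such a $t_n$ requires a quantitative, Khintchine-type version of $k$-recurrence inside $S$; the natural route is to pass to a characteristic factor (a pro-nilsystem of step $\le k$ in the Host--Kra framework) and use that $S$, being a set of $k$-recurrence, equidistributes sufficiently against the relevant nilsequences on that factor. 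I expect this quantitative refinement to be the principal technical content of the proof.
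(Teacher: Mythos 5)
Your inductive construction is exactly the paper's proof: the paper sets $E_1 = E \cap \bigcap_{i=1}^{k} T^{-it_1}E$, observes that $S\setminus[0,kt_1]$ is still a set of $k$-recurrence so that $t_2>kt_1$ can be chosen with $\mu(E_2)>0$, and proceeds by induction --- these $E_n$ are precisely your $E^{(n)}$, together with your identity $E^{(n)}=\bigcap_{(i_1,\dots,i_n)\in\{0,\dots,k\}^n}T^{-(i_1t_1+\cdots+i_nt_n)}E$. The paper's proof stops there, because the content of the lemma that is actually invoked later (in the proof of Theorem A, through Furstenberg's correspondence principle) is positivity of the \emph{finite} intersections over sums formed from any finitely many of the $t_j$'s, which is exactly your bound $\mu\bigl(E\cap\bigcap_{n'\in F_m^{(N)}}T^{-n'}E\bigr)\ge\mu(E^{(N)})>0$. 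So the completed portion of your argument coincides with the paper's argument and already suffices for the lemma as it is used.

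The ``main obstacle'' you then formulate --- passing to the intersection over all of the infinite set $F_m$ by securing $\inf_n\mu(E^{(n)})>0$ --- should not be pursued, because under that literal reading the statement is false, and hence no quantitative refinement can supply the missing bound. In a Bernoulli$(1/2,1/2)$ shift with $E=\{x_0=1\}$ and $S=\N$, for \emph{any} choice of $t_1<t_2<\cdots$ one has $E^{(n)}\subset\{x_0=x_{t_1}=\cdots=x_{t_n}=1\}$, so $\mu(E^{(n)})\le 2^{-(n+1)}$ decays geometrically no matter how the $t_n$ are chosen; consequently $E\cap\bigcap_{n\in F_1}T^{-n}E$ has measure zero (as $F_1\supset\{t_j:j\in\N\}$), and the summable-decrement inequality $\mu(E^{(n)})\ge\mu(E^{(n-1)})-2^{-n}\mu(E)$ you hope to extract from a Khintchine-type or Host--Kra argument is unattainable: Khintchine-type results bound single intersections $\mu(E\cap T^{-n}E)$ from below, not nested intersections of unbounded depth. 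The way to read the lemma is the way the paper uses it: for each $m$ and each finite choice of indices $j_1<\cdots<j_m$ (equivalently, for every finite subset of $F_m$), the corresponding intersection has positive measure. That statement you have proved in full, by the same route as the paper; the projected ``principal technical content'' is both unnecessary and impossible.
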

\begin{proof}
    Let $S$ be a set of $k$-recurrence. Let $(X, \mu, T)$ be a measure preserving system and $E \subset X$ with $\mu(E) > 0$. 
    Since $S$ is a set of $k$-recurrence, there exists $t_1 \in S$ with  
    \[
        \mu \left(E \cap \bigcap_{i=1}^{k}T^{-it_1}E\right)>0.
    \]
    Take $E_1 = E \cap \bigcap_{i=1}^{k}T^{-it_1}E$. Since $S$ is a set of $k$-recurrence we know that $S \setminus [0, kt_1]$ is a set of $k$-recurrence and so there exists $t_2 > kt_1$ with \\
        $\begin{matrix*}[l]
        0 < \mu \left( E_1 \cap \bigcap_{j=1}^{k}T^{-jt_2}E_1 \right) & = & \mu\left(E \cap \bigcap_{i=1}^{k}T^{-it_1}E \cap \bigcap_{j=1}^{k}T^{-jt_2}\left(E \cap \bigcap_{i=1}^{k}T^{-it_1}E\right)\right) \\[1em]
        & = & \mu \left(E \cap \bigcap_{i=1}^{k}T^{-it_1}E \cap \bigcap_{i=1}^{k}T^{-it_2} E  \cap \bigcap_{i=1}^k \bigcap_{j = 1}^k T^{-i t_1 -j t_2}E\right). \\
        \end{matrix*}$ \\

    Take $E_2 = E \cap \bigcap_{i=1}^{k}T^{-it_1}E \cap \bigcap_{i=1}^{k}T^{-it_2} E  \cap \bigcap_{i, j =1}^k T^{-i t_1 -j t_2}E$ and proceed by induction.
\end{proof}
With this, we now begin our proof of \cref{measurable}.

\begin{proof}[Proof of \cref{measurable}]
    Restricting \eqref{item:measurable_3} to the case where $m = 1$ we achieve \eqref{item:measurable_2}. We now show that \eqref{item:measurable_2} implies \eqref{item:measurable_1}. The case $k = 1$ of this direction is mentioned without proof in \cite{Kra-Moreira-Richter-Robertson-sumsets-survey}. We now provide the proof for arbitrary $k$. 
    
    Suppose $S$ has property \eqref{item:measurable_2}. Let $A \subset \N$ such that $d^{*}(A) >0$. Then by hypothesis there exists infinite sets $B \subset R, C \subset A$ with
        \[
        \bigcup_{i = 1}^k \{ib + c : b \in B, c \in C, b<c \} \subset A.
        \]
    Choose $b \in B, c \in C, b< c$. Then for each $i \in [k]$ we have that $c + bi \in A$. So $c \in A \cap A-b \cap A-2b \cap \ldots \cap A-kb \ne \varnothing$. Since $A$ is an arbitrary set of positive upper Banach density, $S$ is a $k$-intersective and is therefore a set of $k$-recurrence.

    We will now show \eqref{item:measurable_1} implies \eqref{item:measurable_3}. Let $S$ be a set of $k$-recurrence and $A \subset \N$ with $d^{*}(A) > 0$. Then by Furstenberg's correspondence principle \cite{Furstenberg_correspondence} there exists a measure preserving system $(X, \mu, T)$ and a set $E \subset X$ with $\mu(E) \geq d^{*}(A)$ such that for every finite set $F \subset \N \cup \{0\}$
    \[
        d^{*}\left( \bigcap_{n \in F}A-n\right) \geq \mu \left( \bigcap_{n \in F} T^{-n}E \right).
    \]
      Now by \cref{lem:krecurrence} there exists an infinite increasing sequence $t_1 < t_2 <.... \in S$ such that for each $ m \in \N$, if $F_m = \{ i_1 t_{j_1} + i_2 t_{j_2} + \ldots + i_m t_{j_m} : 1 \leq i_l \leq k, j_1 < j_2 < \ldots < j_m \}$ then
        \[
        \mu \left(E \cap \bigcap_{n \in F_m} T^{-n}E\right) >0.
        \]
    Now take $b_1 := t_1$ and we have that
    \[
        d^{*}\left(A \cap \bigcap_{i = 1}^{k}\left( A-ib_1 \right) \right) > 0.
    \]
    Since $A \cap  \bigcap_{i = 1}^{k}\left(A-ib_1 \right)$ has positive density, it is infinite. Pick $c_1 \in A \cap \bigcap_{i = 1}^{k}(A-ib_1 )$ such that $c_1 > kb_1 $. Take $b_2 = \min\{t_n: t_n > c_1 \} $ and proceed by induction to choose infinite $B \subset S$ and $C \subset A$ that satisfy \eqref{item:measurable_3}.
 
\end{proof}

\section{Sets of Topological Recurrence}\label{sec_top}

In this section, it is more convenient for us to work with the set of nonnegative integers $\N_0 = \N \cup \{0\}.$ 
We now begin our proof of Theorem \ref{topological}.

\begin{proof}[Proof of \cref{topological}]
It is well known that in any coloring of $\N_{0}$, there is a color which is piecewise syndetic. Therefore we have \eqref{item:topological_3} implies \eqref{item:topological_2}. By restricting to the case where $m=1$, \eqref{item:topological_4} implies \eqref{item:topological_3}. It remains to show that \eqref{item:topological_1} implies \eqref{item:topological_4} and that \eqref{item:topological_2} implies \eqref{item:topological_1}. 

We will first show \eqref{item:topological_2} implies \eqref{item:topological_1}. Suppose that $S$ has property \eqref{item:topological_2}. We will show that $S$ is chromatically $k$-intersective. 
    Let $\N_{0} = \bigcup_{i = 1}^t A_i$ be a finite coloring of $\N_{0}$. By hypothesis there exists $j \in [t ]$ and an infinite $B \subset S, C \subset A_j$ with 
       \[
           \{ib + c: b \in B, c \in C, b < c, 0 \leq i \leq k \} \subset A_j.
        \]
    Choose $b \in B, c \in C$ with $b<c$. Then for each $i \in [k]$ we have $c + bi \in A_j$ and so $c \in A_j \cap A_j -b \cap \ldots \cap A_j - ib$. In particular, the intersection is nonempty. Since the coloring $\N_{0} = \bigcup_{i=1}^{\ell} A_i$ is arbitrary, $S$ is chromatically $k$-intersective.

We now show that \eqref{item:topological_1} implies \eqref{item:topological_4}. Let $S$ be a set of topological $k$-recurrence. Let $A = P \cap T$ with $P$ syndetic and $T$ thick. Let $x \in \{0,1\}^{\N_{0}}$ be the characteristic function for $A$. For an element $a \in \{0,1\}^{\N_{0}}$ we denote $a_i = a(i)$ to be the $i$-th term in the sequence $a$. Take $\sigma : \{0,1\}^{\N_{0}} \to \{0,1\}^{\N_{0}}$ to be the left shift operator given by $\sigma(a)_i = a_{i+1}$. Define a metric $d: \{0,1\}^{\N_{0}} \times \{0,1\}^{\N_{0}} \to [0,\infty)$ by $d(a,b) = \sum_{i=0}^{\infty} \frac{|a_i -b_i|}{2^{i+1}}$. Note that $\{0,1\}^{\N_{0}}$ with $d$ is a compact metric space and $\sigma$ is continuous. 

For a point $a \in \{0,1\}^{\N_{0}},$ we define $O(a) = \{ T^n a : n \in \N_{0} \}$, the orbit of $a$, and $\overline{O(a)}$ to be the closure of $O(a)$ in $(\{0,1\}^\N_{0} , d)$. Let $X = \overline{O(x)}$. We will first show that there exists $ y \in X$ such that $y$ is the characteristic function for a syndetic set.

    Choose the smallest $n \in \N$ such that $P \cup P-1 \cup \ldots \cup P-n-1 \supseteq \N$. Let $m \in \N$ and consider $A \cap [m, m+n]$. We know that there are infinitely many $m \in \N$ such that $m + \{0, 1, \ldots n \} \subset T$, i.e. $m$ is the beginning of an interval included in $T$. 

    For such $m$, $|A \cap [m, m+n]| \geq 2$. Now note that there are only finitely many ways to choose at least two slots from among $n+1$ slots. By the pigeonhole principle there exists $a_1, a_2$ such that for $m_i + a_1, m_i+a_2 \in A$ for a set of $m_i$ such that $m_i$'s are the beginning of intervals included in $T$ and the lengths of such intervals tend to infinity.

    Likewise, from among such pairs $m_i + a_1, m_i + a_2$ there must exist an $a_3 > a_2$ with $a_3 -a_2 < n$ such that $m_{i_k} + a_1, m_{i_k}+a_2, m_{i_k}+ a_3 \in A$ for a set of $m_{i_k}$ such that $m_{i_k}$'s are the beginning of intervals included in $T$ and the lengths of such intervals tend to infinity. Carrying this argument indefinitely, we build a sequence $y$ that appears in $\overline{O(x)}$ such that the distance between consecutive $1$s in $y$ is less that or equal to $n$. Therefore $y$ is the characteristic function for a syndetic set.

    Let $Y$ be a minimal subsystem of $\overline{O(y)}$. Further, suppose towards a contradiction that for every $z \in Y$ we have $z_0 = 0$. Then for every $z \in Y$ we have $\sigma^n (z)_0 = 0$ and so $Y = \{\hat{0}\}$ where $\hat{0}$ is the sequence of all 0s. Therefore $0 \in \overline{O(y)}$. However this is impossible since $y$ is the characteristic function of a syndetic set. Therefore there exists $z \in Y$ with $z_0 = 1$. 

    Let $\epsilon = \frac{1}{4}$. Let $B(z, \epsilon)$ be the ball in $Y$ of radius $\epsilon$ about $z$. Note that $\epsilon$ is chosen so that $w \in B(z, \epsilon)$ implies $w_0 = z_0$. Since $S$ is a set of topological $k$-recurrence we have that there exists $ b_1 \in R$ with 
    \begin{equation}
    \label{eq:define_B_1}
        B_1 = B(z, \epsilon) \cap \bigcap_{i=1}^k \sigma^{-ib_1}B(z, \epsilon ) \ne \varnothing.
    \end{equation}
    Take $w \in B_1$. It follows that
    \[
    z_0 =w_0=w_{b_1}= \ldots = w_{kb_1}.
    \]

    Since $w \in \overline{O(x)}$ we have that there exists $ c_1 \in \N_{0}$ with $w_i = \sigma^{c_1}(x)_i$ for every $i, 0 \leq i \leq kb_1$. Since $\sigma^{c_1}(x)_0 = w_0 = z_0 =1$ we know that $c_1 \in A$. Also $c_1 + ib_1 \in A$ for every $i, 1 \leq i \leq k$. We claim that  we can choose such $c_1$ that $c_1 > b_1$.

    If $w$ is a limit point of $O(x)$, the orbit of $x$ under $\sigma$, then we are done. This is since there would be infinitely many $n$ with $d(\sigma^n (w),x ) < \frac{1}{2^{kb_1 + 1}}$ and we can therefore choose $c_1$  to be such an $n$ larger than $b_1$. Instead suppose $w \in O(x)$ and not a limit point. Since $w \in Y$ and $Y$ is minimal we know that $w$ is recurrent in $Y$. So there exists an $ n > kb_1$ such that 
        \[
        d\left(w, \sigma^{n}(w)\right) < \frac{1}{2^{kb_1 + 1}}.
        \]
    Now $w \in O(x)$ so there exists $ n_1 \in \N_{0}$ such that $\sigma^{n_1}(x) =w$. Therefore for every $i, 0 \leq i \leq kb_1$
        \[
        \sigma^{n+n_1}(x)_i = w_i.
        \]
    Choose $c_1 = n+ n_1$. Now since $w_0=1$ we have $\sigma^{c_1}(x)_0 = 1$ and therefore $c_1 \in A$. Likewise since $w_{b_1}, w_{2b_1}, \ldots w_{kb_1} = 1$ we have that 
        \[
        c_1 + b_1, c_1 + 2b_1 , \ldots c_1 + kb_1 \in A
        \]
        and note that $b_1 \in R, c_1 \in A$ and $b_1 < c_1$.

    Now since $B_1 = B(z, \epsilon) \cap \bigcap_{i=1}^k \sigma^{-ib_1}B(z, \epsilon ) \ne \varnothing$, and $S$ is a set of $k$-topological recurrence, we can find a $b_2 \in S$ such that $b_2 > c_1$ and 
        \[
        B_2 = B_1 \cap \bigcap_{i=1}^k \sigma^{-ib_2} B_1 \ne \varnothing.
        \]
   
    With $B_2$ in the place of $B_1$ (as defined in \eqref{eq:define_B_1}), we repeat the same argument as above. In general, by induction, we can find sequences $(b_n), (c_n)$ such that $b_n \in R$, $c_n \in A$ with $b_1 < c_1 < b_2 < c_2 < \ldots$ and when we take $B = \{b_n : n \in \N \}$ and $C = \{c_n: n \in \N \}$ we get for every $m \in \N$
    \[
        \{ i_1 b_1 + i_2 b_2 + \ldots + i_m b_m + c : b_j \in , c \in C, 0 \leq i_j \leq k \text{ for every } j, 1 \leq j \leq m, b_1 < \ldots < b_m < c \} \subset A
    \]
    which satisfies \eqref{item:topological_4}.

\end{proof}

\section{Sets of Zero Banach Density}\label{sec_0density}

To explain why \cref{Y_thing} implies \cref{0density_theorem} and why a positive answer to \cref{0densityquestion} would give a negative answer to \cref{KMRR4_7}, we present the following lemma. This lemma was mentioned without proof by Kra, Moreira, Richter and Robertson in \cite{Kra-Moreira-Richter-Robertson-sumsets-survey}.

\begin{lemma}\label{4_8implies_not4_7}
If \cref{0densityquestion} had a positive answer, then \cref{Y_thing} would be false and \cref{KMRR4_7} would have a negative answer, i.e. for any set $F \subset \N$ of zero Banach density one may find a subset $A \subset F$ with $\underline{d}_F (A) > 0$, $M \in \N$ and a sequence $(N_t)_{t \geq M}$ such that 
\[
    Y = A \setminus \bigcup_{t \geq M} \left(  (A - t) \cap [N_t, \infty) \right)
\]
has positive relative density in $F$, but does not contain any set of the form $B+C$ where $B, C$ are infinite subsets of $\N$. 
\end{lemma}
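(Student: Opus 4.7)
The plan is to use the positive answer to \cref{0densityquestion} to produce a subset $A \subset F$ with strong difference-thinness, then excise appropriate tails to define $Y$. First I would apply the hypothesis to the given $F$ to obtain $A \subset F$ with $\underline{d}_F(A) = \delta > 0$ and $M \in \N$ such that $\underline{d}_F(A \setminus (A-t)) = \delta$ for all $t \geq M$. From the disjoint decomposition $A = (A \setminus (A-t)) \sqcup (A \cap (A-t))$ and superadditivity of $\liminf$ on nonnegative sequences, conclude $\underline{d}_F(A \cap (A-t)) = 0$ for each $t \geq M$. The key sharpening I would exploit is that along any subsequence $(L_k) \to \infty$ with $|A \cap [L_k]|/|F \cap [L_k]| \to \delta$ (guaranteed by the definition of $\liminf$), one automatically has $|A \cap (A-t) \cap [L_k]|/|F \cap [L_k]| \to 0$ \emph{simultaneously} for every $t \geq M$, because $|A \setminus (A-t) \cap [L_k]|/|F \cap [L_k]|$ is trapped between its liminf $\delta$ and $|A \cap [L_k]|/|F \cap [L_k]|$, both of which tend to $\delta$.

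Next, by a diagonal extraction I would select $L_{k_1} < L_{k_2} < \ldots$ from $(L_k)$ so that for every $j$ both $|A \cap [L_{k_j}]|/|F \cap [L_{k_j}]| > \delta/2$ and $\sum_{t=M}^{M+j-1} |A \cap (A-t) \cap [L_{k_j}]|/|F \cap [L_{k_j}]| < \delta/4$ hold. Setting $N_{M+j-1} := L_{k_j}$ then yields strictly increasing thresholds such that at each sampling point $N = L_{k_j}$ only the indices $t \in [M, M+j-1]$ can contribute to the excised set $A \setminus Y = \bigcup_{t \geq M} A \cap (A-t) \cap [N_t, \infty)$, so $|Y \cap [L_{k_j}]|/|F \cap [L_{k_j}]| > \delta/2 - \delta/4 = \delta/4$. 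This certifies that $Y$ has positive relative density in $F$.

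For the sumset obstruction I would argue by contradiction: assume $B + C \subset Y$ with $B, C \subset \N$ infinite, and pick $c_1 < c_2 \in C$ with $t := c_2 - c_1 \geq M$ (possible since $C$ is infinite). For every $b \in B$, both $b + c_1$ and $b + c_2$ lie in $Y \subset A$, hence $b + c_1 \in A \cap (A - t)$; for $b$ large enough to satisfy $b + c_1 \geq N_t$ we get $b + c_1 \in (A-t) \cap [N_t, \infty) \subset A \setminus Y$, contradicting $b + c_1 \in Y$. Thus $Y \subset F$ has positive relative density and contains no infinite $B + C$, which simultaneously contradicts \cref{Y_thing} and, with $Y$ serving as the witness subset of $F$, furnishes a negative answer to \cref{KMRR4_7}.

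The main obstacle is the density-preservation step: the diagonal construction above only controls the excision at the sampled indices $L_{k_j}$, so strictly it certifies positive \emph{upper} relative density of $Y$. Strengthening this to positive \emph{lower} relative density, if the statement demands it, requires bounding $|A \setminus Y \cap [N]|/|F \cap [N]|$ at intermediate $N$, which I expect can be done using the uniform-in-$t$ estimate $|A \cap (A-t) \cap [N]|/|F \cap [N]| \leq |A \cap [N]|/|F \cap [N]| - \delta + o(1)$ (an immediate consequence of the hypothesis on $A$) combined with a sufficiently fast growth of the $N_t$ so that $T(N) := \max\{t \geq M : N_t \leq N\}$ grows slowly enough for the product $T(N)\,(|A \cap [N]|/|F \cap [N]| - \delta)$ to remain small along all large $N$.
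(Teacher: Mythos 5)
Your reduction of the sumset obstruction is correct and coincides with the paper's: any infinite $C$ contains $c_1<c_2$ with $t=c_2-c_1\ge M$, and then for all large $b\in B$ the element $b+c_1$ lies in $A\cap(A-t)\cap[N_t,\infty)$, which is excised from $Y$. The genuine gap is exactly the one you flag at the end, and it is not optional: in this paper ``relative density'' means the lower relative density $\underline{d}_F$, and both intended conclusions need $\underline{d}_F(Y)>0$ --- \cref{Y_thing} asserts precisely that $\liminf_{n}|Y\cap F\cap[n]|/|F\cap[n]|=0$, so contradicting it requires a lower bound at \emph{every} large scale, and a negative answer to \cref{KMRR4_7} requires a witness $Y\subset F$ with $\underline{d}_F(Y)>0$. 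Your diagonal extraction controls the excision only at the sampled scales $L_{k_j}$, so it certifies positive \emph{upper} relative density, which is not enough. Moreover, the repair you sketch cannot work as stated: $T(N)=\max\{t\ge M: N_t\le N\}$ tends to infinity for any choice of $(N_t)$, while the excess $e(N)=|A\cap[N]|/|F\cap[N]|-\delta$ is only known to have liminf zero; if the upper relative density of $A$ in $F$ exceeds $\delta$ (nothing in a positive answer to \cref{0densityquestion} forbids this), then along the scales where $e(N)$ stays bounded away from zero the product $T(N)\,e(N)$ is unbounded, so no growth condition on $(N_t)$ keeps it small along all large $N$.

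For comparison, the paper uses the other direction of the liminf: for each $t\ge M$ it chooses $N_t$ so that $|F\cap(A\setminus(A-t))\cap[n]|/|F\cap[n]|\ge\delta-\delta/4^{t}$ for \emph{all} $n\ge N_t$, with errors summable in $t$, and then estimates $Y$ at an arbitrary scale by noting that on $[1,N_{m+1}]$ only the excisions for $t_1,\dots,t_m$ are active, so there $Y$ agrees with $Y_m=A\setminus\bigcup_{i\le m}\left((A-t_i)\cap[N_i,\infty)\right)$, for which it asserts $\underline{d}_F(Y_m)\ge\delta-\sum_{i\le m}\delta/4^{i}$; summing the geometric series gives $\underline{d}_F(Y)\ge 2\delta/3$ at every large scale. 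So the ingredient your write-up is missing is an all-scales bound with $t$-summable errors, rather than a bound along a subsequence of scales. Be aware, though, that in supplying such a bound for $m\ge 2$ excisions one again meets the excess term you identified --- each $A\setminus(A-t_i)$ is only known to be large relative to $F$, while the density of $A$ itself above $\delta$ is uncontrolled --- so a complete argument must confront that point head-on; your device of sampling scales where $A$ has density close to $\delta$ handles it, but at the cost of surrendering precisely the all-scales control that the statement requires.
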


    \begin{proof}
    
Suppose that for every set $F \subset \N$ with zero upper Banach density, there exists a subset $A \subset F$ with $\underline{d}_{F}(A) > 0 $ such that for all but finitely many $t \in N$, we have $\underline{d}_{F}(A) = \underline{d}_{F}(A \setminus (A-t))$. Let $M $ be greater than all the $t$ for which $\underline{d}_{F}(A) > \underline{d}_{F}(A \setminus (A-t))$.

 Now pick $t_1 = M$. Since $\delta = \underline{d}_F (A) = \underline{d}_F (A \setminus (A-t_1) )$, there exists $ N_1 \in \N$ such that for every $n \geq N_1$,
        \[
        \frac{|F \cap (A \setminus (A - t_1) ) \cap [n]| }{|F \cap [n]|} \geq \delta - \frac{\delta}{4}.
        \]
    Take $Y_1 = A \setminus (A-t_1 \cap [N_t, \infty))$ and take $t_2 = t_1 + 1$. 

    Likewise there exists $N_2 > N_1$ such that for every $n \geq N_2$,
        \[
        \frac{|F \cap (A \setminus A - t_2 ) \cap [n]| }{|F \cap [n]|} \geq \delta - \frac{\delta}{16}.
        \]
    
    Take $Y_2 = A \setminus \bigcup_{i=1}^{2} (A - t_i \cap [N_i, \infty))$ and note that $Y_1 \cap [N_1] = Y_2 \cap [N_1]$ and that $\underline{d}_F (Y_2) \geq \delta - \sum_{i=1}^2 \frac{\delta}{4^i}$.

    Inductively, for $t > M$, there exists $N_{t} > N_{t-1}$ such that for $n \geq N_t$
    \[
        \frac{|F \cap (A \setminus A - t ) \cap [n]| }{|F \cap [n]|} \geq \delta - \frac{\delta}{4^t}.
    \]
    Define 
    \[
        Y = A \setminus \bigcup_{t \geq M} \{a \geq N_t : a \in A-t \}.
    \]
    We will show that $\underline{d}_F (Y) \geq \frac{2 \delta}{3} = \delta - \sum_{i=1}^{\infty} \frac{\delta}{4^i}$.
    
    Suppose towards a contradiction that $\underline{d}_F (Y) < \frac{2 \delta}{3} = \delta - \sum_{i=1}^{\infty} \frac{\delta}{4^i}$. Then there exists a sequence $(n_i)_i \to \infty$ and an $\epsilon > 0$ such that for every $i$
        \[
        \frac{|F \cap Y \cap [n_i]|}{|F \cap [n_i]|} < \frac{2\delta}{3} - \epsilon.
        \]
    Pick $n_i > N_1$. There exists $m \in \N$ with $n_i \in \{N_m, N_{m+1}]$. But by construction $Y \cap [N_{m+1}] = Y_m \cap [N_{m+1}]$ and so 
        \[
        \frac{|F \cap Y_m \cap [n_i] |}{| F \cap [n_i]|} \geq \delta - \sum_{i=1}^{m} \frac{\delta}{4^i} > \delta - \sum_{i=1}^{\infty} \frac{\delta}{4^i} - \epsilon = \frac{2\delta}{3} -\epsilon. 
        \]
    This gives the desired contradiction and therefore $\underline{d}_F (Y) \geq \frac{2 \delta}{3} > 0$.
    
    Finally let $B, C \subset \N$ be arbitrary infinite sets. There exist $c_1, c_2 \in C$ such that the $c_2 - c_1 > M$. It follows that $B + C \supset B + \{c_1, c_2\}$ contains infinitely pairs whose difference is $c_2 - c_1 > M$. From its construction, $Y$ cannot contain infinitely many such pairs and so cannot contain $B + C$.

    \end{proof}

For a non-decreasing sequence of natural numbers $\{x_n\}_{n \in \N}$, we define $FS(\{x_n\}) = \{ \sum_{i=1}^{k} x_{n_i}: x_{n_i}$ is a subsequence of $x_n \}$. A set of the form $FS(\{x_n\})$ is called an \emph{IP set} in the literature. We have a lemma.

\begin{lemma}\label{S_has_0_density}
    The set $FS(\{4^n\}_{n \geq 1})$ has zero Banach density. 
\end{lemma}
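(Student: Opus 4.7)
The plan is to exploit the base-$4$ representation. Observe that $s \in S := FS(\{4^n\}_{n \geq 1})$ if and only if every base-$4$ digit of $s$ lies in $\{0,1\}$ and the units digit (the coefficient of $4^0$) is zero. So counting elements of $S$ in a given interval reduces to counting base-$4$ strings of a prescribed form and length.

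Given natural numbers $M < N$, set $L := N - M$ and pick $k \geq 0$ with $4^k \leq L < 4^{k+1}$. For each $s \in [M,N]$, write $s = 4^{k+1} q + r$ with $0 \leq r < 4^{k+1}$; since the interval has length strictly less than $4^{k+1}$, the quotient $q$ takes at most $2$ values as $s$ ranges over $[M,N]$. If in addition $s \in S$, then the low part $r < 4^{k+1}$ must have base-$4$ digits in $\{0,1\}$ at positions $1, 2, \ldots, k$ and digit $0$ at position $0$, giving at most $2^k$ possibilities for $r$. Multiplying the two counts yields
\[
    |S \cap [M,N]| \leq 2 \cdot 2^k = 2^{k+1}.
\]

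Since $L \geq 4^k$, this gives $|S \cap [M,N]|/L \leq 2^{k+1}/4^k = 2/2^k$, and the constraint $L \to \infty$ forces $k \to \infty$. Taking the limsup over intervals $[M,N]$ with $N - M \to \infty$ therefore yields $d^*(S) = 0$. I do not anticipate any real obstacle; the heart of the proof is the observation that $S$ corresponds to a very restricted family of base-$4$ strings, after which the count is a routine two-line calculation.
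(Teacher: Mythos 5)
Your proof is correct and takes essentially the same approach as the paper: both exploit that elements of $FS(\{4^n\}_{n \geq 1})$ are exactly the numbers whose base-$4$ digits lie in $\{0,1\}$ (with units digit $0$), so that any window of length roughly $4^k$ contains at most $O(2^k)$ elements, and $2^k/4^k \to 0$. The only difference is bookkeeping: you count low-digit patterns within at most two residue blocks mod $4^{k+1}$, while the paper differences its explicit counting formula $|[1,x]\cap A| = \sum_j 2^{i_j-1}$ at endpoints assumed to lie in the set; both yield the same bound.
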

\begin{proof}
Let $A = FS(\{4^n\}_{n \geq 1})$.
    First note that all elements of $A$ are of the form $x = 4^{i_1} + 4^{i_2} + \ldots 4^{i_k}$ for some increasing set of numbers $i_1 < i_2 < \ldots < i_k$. Also note that if $x = 4^{i_1} + 4^{i_2} + \ldots 4^{i_k}$ then $|[1, x] \cap F| = 2^{i_1 -1} + 2^{i_2 -1} + \ldots + 2^{i_k -1}$. Let $\epsilon > 0$. We will find an $N \in \N$ such that for all $n \geq N$ and all $M \in \N$, we have $\frac{|[M, M+ n] \cap A|}{n} < \epsilon.$ Choose $a$ such that $(\frac{1}{2})^a < \epsilon$ and $N > 4^a$. Let $n \geq N$ and let $b$ be the largest integer such that $4^b \leq n$.
    
    Let $M \in \N$. We consider the cardinality of $[M, M+n] \cap A$. Without loss of generality assume both $M, M + n \in A$ (If they are not we can reduce the size of $n$ and get a larger ratio). 
    Suppose $|[1,M]\cap A| = 2^{i_m -1} + \ldots + 2^{i_1 -1}$ and $|[1, M+n] \cap A| = 2^{j_p -1} + \ldots + 2^{j_1 -1}$. Since $j_p \leq b$, it follows that 
    \[
        |[M, M+n] \cap A| = 1 +  2^{j_p -1} + \ldots + 2^{j_1 -1}- (2^{i_m -1} + \ldots + 2^{i_1 -1}) \leq 2^b.
    \]
Therefore,
    \[
        \frac{|[M, M+n] \cap A|}{n} \leq \frac{2^b}{4^b} \leq \frac{1}{2^a} < \epsilon.
    \]
\end{proof}

\begin{proof}[Proof of \cref{Y_thing}]
     Take $S = FS(\{4^n\}_{n>0}) \cup \{1\}$. First, by \cref{S_has_0_density}, $S$ has zero upper Banach density.
     The sequence $S' = 1_S$ can be constructed as a limit of words on a $0,1$ alphabet as follows.
     Let $W_1 = 1001$, $W_2 = 1001000000000001001 =W_1 0^{11}W_1$ and inductively $W_{k+1} = W_k 0^{(4^{k+1}- \sum_{i=1}^k 4^i)}W_k$. Note that $S'|_{[1,\sum_{i=1}^k 4^i]} = W_k$.

    We say that $n$ appears as a distance in $W_k$ if there exist two 1s appearing as the $i$th and $(i+n)$th letters in $W_k$. Note that the construction of $S'$ guarantees that the distance between two copies of $W_k $ in $S'$ is never less that the distance between any two copies of $W_k$ in $W_{k+1}$. Furthermore the distance between a copy of $W_k$ and any other $1$ is never less that the distance between two copies of $W_k$ in $W_{k+1}$. 
    
    Let $A \subset S$ have positive relative lower density, let $M \in \N$, and let $(N_t)_t$ be a sequence of natural numbers. Let $\ell$ be the smallest integer such that the length of $W_\ell$ is larger than $M$. Define
    \[
        Y := A \setminus \bigcup_{t \geq M} \left(  (A - t) \cap [N_t, \infty) \right).
    \]
    Let $N \in \N$ with $N > \ell+1$. Pick $N^*$ to be $\sum_{n=1}^{N+2} 4^n$, the length $W_{N + 2}$ and $L = \max \{ N_t: 1 \leq t \leq N^* \}$. 

    Note that we may view $S'$ as an infinite alternating concatenation of $W_{N+2}$ and blocks of $0^k$ for various $k \in \N$, i.e.
        \[
        S'=W_{N+2}0^{k_1}W_{N+2}0^{k_2} W_{N+2} 0^{k_3} \ldots
        \]
    Consider a copy of $W_{N+2}$ which appears after the $L$th copy of $W_{N+2}$. Note that such a copy appears after $L$, and so all distances greater than $M$ appearing in $W_{N+2}$ have been removed from $1_Y$.  Therefore, at most $2^{\ell+1}$ digits $1$ of that copy of $W_{N+2}$ remain in $1_Y$. On the other hand, there are $2^{N+2}$ digits $1$ of that copy are in $1_S$. From this we see
        \[
        \liminf_{n \to \infty} \frac{|Y \cap S \cap [n]|}{|S \cap [n]|} 
        < \frac{2^{\ell + 1}}{2^N}
        \]
    for every $N \in \N$ and so the left hand side is zero.
\end{proof}

\begin{remark}
The proof of \cref{Y_thing} remains unchanged if we modify the set $S$ by replacing the sequence $\{4^n\}_{n \in \N}$ with any sequence $\{a_n\}_{n \in \N}$ such that for every $n\in \N, a_{n+1} > (2+ \epsilon) a_n$ for a fixed $\epsilon>0$. 
\end{remark}
\cref{4_8implies_not4_7} and \cref{Y_thing} imply \cref{0density_theorem} and so a negative answer to \cref{0densityquestion}.

\bibliographystyle{abbrv}
\bibliography{Results/For_Paper1/Drafts/Ref}
\end{document}